\pgfplotsset{compat = 1.3}
\definecolor{myred}{rgb}{0.82, 0.1, 0.26} 
\definecolor{mygreen}{rgb}{0.0, 0.7, 0.0} 
\definecolor{myblue}{rgb}{0,0.4,1} 
\definecolor{myyellow}{rgb}{1,0.8,0.4} 
\definecolor{mybrown}{RGB}{169,94,43} 
\definecolor{mycyan}{RGB}{29,202,190} 
\definecolor{myviolet}{RGB}{223,133,223} 
\definecolor{mypink}{RGB}{255,51,153} 
\definecolor{mygreen2}{RGB}{153,230,93} 
\definecolor{AaltoYellow}{RGB}{255, 205, 0}
\definecolor{AaltoRed}{RGB}{239, 51, 64}
\definecolor{AaltoBlue}{RGB}{0, 94, 184}
\definecolor{AaltoPurple}{RGB}{117, 59, 189}
\definecolor{AaltoGreen}{RGB}{120, 190, 32}
\definecolor{AaltoPurple2}{RGB}{192, 26, 162}
\newcommand{\E}{\mathbb{E}}
\newcommand{\T}{\ensuremath{\mathsf{T}}}
\newcommand{\tr}{\ensuremath{\operatorname{tr}}}
\newtheorem{theorem}{Theorem}
\newtheorem{lemma}{Lemma}
\newtheorem{corollary}{Corollary}
\newtheorem{proposition}{Proposition}
\newtheorem{remark}{Remark}
\newif\ifdouble 
\begin{document}

\begin{frontmatter}

\title{Continuous-Discrete Filtering and Smoothing on Submanifolds of Euclidean Space}


\author{Filip~Tronarp and Simo~Särkkä}

\address{Department of Electrical Engineering and Automation, Aalto University, Rakentajanaukio 2, 02150 Espoo, Finland}

\begin{abstract}
In this paper the issue of filtering and smoothing in continuous discrete time is studied when the state variable evolves in some submanifold of Euclidean space, which may not have the usual Lebesgue measure. Formal expressions for prediction and smoothing problems are derived, which agree with the classical results except that the formal adjoint of the generator is different in general. For approximate filtering and smoothing the projection approach is taken, where it turns out that the prediction and smoothing equations are the same as in the case when the state variable evolves in Euclidean space. The approach is used to develop projection filters and smoothers based on the von Mises--Fisher distribution.
\end{abstract}

\begin{keyword}
Continuous Discrete Filtering and Smoothing, Directional Statistics, Nonlinear Filtering and Smoothing, Riemann manifolds.
\end{keyword}

\end{frontmatter}

\section{Introduction}
\label{sec:intro}

Consider the continuous-discrete state space model:
\begin{subequations}\label{eq:pomp}
\begin{align}
\dif X(t) &= a(t,X(t)) \dif t + \sigma(t,X(t)) \dif W(t), \\
Y(t_n) &\mid X(t_n) \sim m(t_n,y \mid X(t_n)),
\end{align}
\end{subequations}
where $a \colon [0,T] \times \mathbb{R}^d \to \mathbb{R}^d$ is the drift,
$\sigma \colon [0,T] \times \mathbb{R}^d \to \mathbb{R}^{d \times q}$ is the diffusion coefficient, $W$ is a standard Wiener process on $\mathbb{R}^q$, and
$X$ is the state variable, which is measured by $\{Y(t_n)\}_{n=1}^N$ with measurement densities $m(t_n,y\mid X(t_n))$. The likelihood at time $t_n$ is denoted by $L(t_n,x)$ and the process noise covariance rate is denoted by $Q(t,x) = \sigma(t,x)\sigma^\T(t,x)$.

The problem of filtering and smoothing for the model in Eq. \eqref{eq:pomp} has been well studied when the filtering and smoothing distributions on $X(t)$ admit densities with respect to the Lebesgue measure on $\mathbb{R}^d$ \cite{Anderson1972,Leondes1970}. Implementation of the exact filtering and smoothing relations are in general intractable, with the notable exception of affine Gaussian systems \cite{Kalman1960,KalmanBucy1961,RauchTungStriebel1965}. Consequently approaches to approximate inference have been developed such as \emph{assumed density} \cite{Maybeck1982} and the \emph{projection approach} \cite{Brigo1999,Koyama2018}.

However, the continuous-discrete time inference problem is not as well studied for the case when the state $X$ is only supported on some submanifold $\mathbb{X}$ of $\mathbb{R}^d$, the assumed density approach has been taken for matrix Fisher distributions on the special orthogonal group $\mathbb{SO}(3)$ \cite{Lee2018b} and the von Mises--Fisher distribution on the unit sphere $\mathbb{S}^2$ in $\mathbb{R}^3$ \cite{tronarp2018b}. The discrete time problem has been given attention in, for example,  \cite{Bukal2017,Traa2014,Kurz2014a,Glover2014,Kurz2016a,Gilitschenski2016}.

The contribution of this paper is to establish the formal solutions to the filtering and smoothing problems when is supported on a submanifold $\mathbb{X}$ of $\mathbb{R}^d$ under the assumption that the filtering and smoothing distributions admit density with respect to some base measure $\lambda$ on $\mathbb{X}$. The solution formulae are similar to the classic Euclidean case with the difference being that the adjoint of the generator is taken in $\mathcal{L}_2(\mathbb{X},\lambda)$ rather than $\mathcal{L}_2(\mathbb{R}^d)$. Furthermore, the formal solutions are approximated using the projection method \cite{Brigo1999}. This gives the same approximation formulae as in the $\mathcal{L}_2(\mathbb{R}^d)$ case, thus generalising the result of \cite{Brigo1999,Koyama2018}.

The rest of this paper is organised as follows, the formal solutions to the filtering and smoothing problems are derived in Section \ref{sec:formal_solution}, in Section \ref{sec:projection} the formal solutions are approximated by the projection approach. The methodology is applied to reference vector tracking in Section \ref{sec:application} and conclusions are given in Section \ref{sec:conclusion}.

\section{Formal Solution}\label{sec:formal_solution}
In the following the set of measurements up to time $t$ is denoted by $\mathscr{Y}(t) = \{y(t_n) \colon t_n \leq t \}$, the filtering density is denoted by $p_F(t,x) = p(t,x\mid \mathscr{Y}(t))$, and the smoothing density is denoted by $p_S(t,x) = p(t,x\mid \mathscr{Y}(T))$. Recall that the generator of the It\^o  process $X$ is given by \cite{Sarkka2019}
\begin{equation}\label{eq:generator}
\mathcal{G}[\phi] = \sum_i  a_i \partial_i \phi + \frac{1}{2} \sum_{i,j} Q_{i,j}\partial_{i,j}^2 \phi,
\end{equation}
and its adjoint taken in $\mathcal{L}_2(\mathbb{X},\lambda)$ is denoted by $\mathcal{G}^a$. The present development does not require the explicit expression for $\mathcal{G}^a$ but it is used in the formal solution formulae. The Fokker--Planck equation on $\mathbb{X}$ with respect to $\lambda$ is given in Proposition \ref{prop:fokker}.
\begin{proposition}\label{prop:fokker}
The probability density for $X(t)$ evolves according to
\begin{equation}\label{eq:fokker}
\partial_t p = \mathcal{G}^a [p],
\end{equation}
where $\mathcal{G}^a$ is the adjoint of $\mathcal{G}$ taken in $\mathcal{L}_2(\mathbb{X},\lambda)$.
\end{proposition}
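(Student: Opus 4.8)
The plan is to derive the evolution equation weakly, by testing against smooth functions and invoking the definition of the adjoint. First I would recall Dynkin's formula: for a sufficiently regular test function $\phi$ on $\mathbb{X}$ (smooth, compactly supported, and in the domain of $\mathcal{G}$), applying It\^o's formula to $\phi(X(t))$ and taking expectations gives
\[
\frac{\dif}{\dif t}\E[\phi(X(t))] = \E[\mathcal{G}[\phi](X(t))],
\]
with $\mathcal{G}$ the generator in Eq.~\eqref{eq:generator}. This step uses that $X$ solves Eq.~\eqref{eq:pomp} with a drift and diffusion that keep it on $\mathbb{X}$, so the ambient It\^o formula applies and the martingale term vanishes under expectation.

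Next I would express both expectations as integrals against the density $p$ with respect to the base measure $\lambda$. By definition of $p(t,\cdot)$ as the density of the law of $X(t)$ on $(\mathbb{X},\lambda)$,
\[
\E[\phi(X(t))] = \int_{\mathbb{X}} \phi(x)\, p(t,x)\, \lambda(\dif x), \qquad \E[\mathcal{G}[\phi](X(t))] = \int_{\mathbb{X}} \mathcal{G}[\phi](x)\, p(t,x)\, \lambda(\dif x).
\]
Differentiating the first identity under the integral sign and equating it with the second yields
\[
\int_{\mathbb{X}} \phi(x)\, \partial_t p(t,x)\, \lambda(\dif x) = \int_{\mathbb{X}} \mathcal{G}[\phi](x)\, p(t,x)\, \lambda(\dif x) = \langle \mathcal{G}[\phi], p\rangle_{\mathcal{L}_2(\mathbb{X},\lambda)}.
\]

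I would then invoke the defining property of the adjoint taken in $\mathcal{L}_2(\mathbb{X},\lambda)$, namely $\langle \mathcal{G}[\phi], p\rangle_{\mathcal{L}_2(\mathbb{X},\lambda)} = \langle \phi, \mathcal{G}^a[p]\rangle_{\mathcal{L}_2(\mathbb{X},\lambda)}$, to rewrite the right-hand side as $\int_{\mathbb{X}} \phi(x)\, \mathcal{G}^a[p](t,x)\, \lambda(\dif x)$. Since the resulting identity holds for every admissible $\phi$, the fundamental lemma of the calculus of variations gives $\partial_t p = \mathcal{G}^a[p]$ $\lambda$-almost everywhere, which is exactly Eq.~\eqref{eq:fokker}.

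The main obstacle lies not in the duality chain, which is bookkeeping once the correct inner product is fixed, but in the regularity justifications that this derivation deliberately keeps formal: interchanging $\frac{\dif}{\dif t}$ with the integral, ensuring that the test functions $\phi$ range over a class dense enough in $\mathcal{L}_2(\mathbb{X},\lambda)$ to conclude equality of the densities, and, most delicately, that $\mathcal{G}^a$ is well defined on a domain containing $p(t,\cdot)$. The conceptual payoff is that simply replacing the Lebesgue inner product by the $\mathcal{L}_2(\mathbb{X},\lambda)$ inner product automatically produces the correct $\lambda$-adjoint in place of the Euclidean Fokker--Planck operator, which is precisely the point that distinguishes the present setting from the classical one.
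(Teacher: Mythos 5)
Your proposal matches the paper's own proof essentially step for step: It\^o's formula (Dynkin's formula) for the time derivative of $\E[\phi(X(t))]$, rewriting both sides as integrals against $p\,\dif\lambda$, invoking the $\mathcal{L}_2(\mathbb{X},\lambda)$ adjoint, and concluding from the arbitrariness of $\phi$. Your additional remarks on the regularity issues left formal (differentiation under the integral, density of the test class, domain of $\mathcal{G}^a$) are a sensible acknowledgement of what the paper's formal derivation also glosses over, but the argument itself is the same.
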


\begin{proof}
It\^o's formula implies that for arbitrary $\phi \in \mathcal{C}^2(\mathbb{X})$
\begin{equation*}
\begin{split}
\partial_t &\E[\phi(X(t))] =  \int_\mathbb{X} \phi(x) \partial_t p(t,x) \dif \lambda(x) = \E[\mathcal{G}[\phi](X(t))]\\
&= \int_\mathbb{X} \mathcal{G}[\phi](x) p(t,x) \dif \lambda(x) \\
&= \int_\mathbb{X} \phi(x)\mathcal{G}^a[p](t,x) \dif \lambda(x),
\end{split}
\end{equation*}
but $\phi$ is arbitrary and hence  $\partial_t p = \mathcal{G}^a [p]$.
\end{proof}
From the fact that $X$ is a Markov process and Proposition \ref{prop:fokker} it is clear that the filtering distribution, between measurements, evolves as
\begin{equation}\label{eq:formal_prediction}
\partial_t p_F = \mathcal{G}^a[p_F].
\end{equation}
The update is given by Baye's rule
\begin{equation}
p_F(t_n,x) = \frac{ L(t_n,x) p_F(t_n^-,x) }{ \int_\mathbb{X} L(t_n,x) p_F(t_n^-,x) \dif \lambda(x)}.
\end{equation}

The following Theorem was proved in \cite{Anderson1972} when the filtering and smoothing distributions of $X$ have densities with respect to the Lebesgue measure. 
\begin{theorem}\label{thm:formal_smoothing}
The smoothing density satisfies 
\begin{equation}\label{eq:formal_smoothing}
\partial_t p_S = \frac{p_S}{p_F} \mathcal{G}^a[p_F] - p_F \mathcal{G}\Big[ \frac{p_S}{p_F} \Big].
\end{equation}
\end{theorem}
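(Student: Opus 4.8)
The plan is to reduce the smoothing equation to the forward equation \eqref{eq:formal_prediction} for $p_F$ together with a \emph{backward} Kolmogorov equation, by factorising the smoothing density through the likelihood of the future observations. Concretely, I would introduce the backward likelihood
\[
\beta(t,x) = p\big(\mathscr{Y}(T)\setminus\mathscr{Y}(t) \mid X(t) = x\big),
\]
i.e. the density of the observations strictly after time $t$ given the current state. Since $X$ is Markov, conditioning on $X(t)$ decouples past and future observations, so Bayes' rule gives the product form $p_S(t,x) = Z^{-1} p_F(t,x)\,\beta(t,x)$, where $Z = \int_{\mathbb{X}} p_F(t,x)\beta(t,x)\dif\lambda(x)$ is the normalising constant. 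Between two consecutive measurement times no new data arrive, so the conditioning set defining $\beta$ is fixed and $Z$ is constant in $t$; equivalently $p_S/p_F = Z^{-1}\beta$ is, up to a $t$-independent factor, the backward likelihood.

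The next step is to establish the evolution equation for $\beta$. By Chapman--Kolmogorov and the Markov property, for $s<t$ in an observation-free interval one has $\beta(s,x) = \E[\beta(t,X(t)) \mid X(s)=x]$, which is exactly the statement that $\beta(t,X(t))$ is a martingale. Applying It\^o's formula and cancelling the martingale part then yields the backward Kolmogorov equation $\partial_t \beta = -\mathcal{G}[\beta]$. The crucial observation here is that the generator $\mathcal{G}$ in \eqref{eq:generator} is intrinsic to the diffusion and does not involve the base measure $\lambda$; only its adjoint $\mathcal{G}^a$ does. Hence the backward equation is identical to the Euclidean case, and since $Z$ is constant in $t$ the same holds for the normalised quantity: $\partial_t (p_S/p_F) = -\mathcal{G}[p_S/p_F]$.

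With these two ingredients in hand, the result follows by the product rule. Writing $p_S = (p_S/p_F)\,p_F$ and differentiating in $t$ gives
\[
\partial_t p_S = \frac{p_S}{p_F}\,\partial_t p_F + p_F\,\partial_t\Big(\frac{p_S}{p_F}\Big).
\]
Substituting the forward equation $\partial_t p_F = \mathcal{G}^a[p_F]$ from \eqref{eq:formal_prediction} into the first term and the backward equation $\partial_t(p_S/p_F) = -\mathcal{G}[p_S/p_F]$ into the second yields exactly \eqref{eq:formal_smoothing}.

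The main obstacle I anticipate is not the product-rule bookkeeping but the justification of the two probabilistic representations on the submanifold: the Markov factorisation $p_S = Z^{-1}p_F\beta$ and, especially, the backward equation for $\beta$. The point to argue carefully is that passing from $\mathbb{R}^d$ to $\mathbb{X}$ equipped with $\lambda$ changes only the adjoint $\mathcal{G}^a$ (already handled in Proposition \ref{prop:fokker}), whereas $\mathcal{G}$ itself and the martingale argument underlying the backward equation are unaffected. This is precisely why the smoothing equation retains its classical form despite the change of base measure.
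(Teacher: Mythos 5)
Your proposal is correct, but it takes a genuinely different route from the paper. The paper follows Anderson's original argument: it writes $p_S(t,x)$ as an integral of $p_S(t+\dif t,\cdot)$ against the \emph{backward} transition density, evaluates that backward density by Bayes' rule together with the infinitesimal expansion $p(t+\dif t,z\mid X(t)=x) = \delta(z-x) + \mathcal{G}^a[\delta(z-x)]\dif t + o(\dif t)$, and then collects first-order terms. You instead use the two-filter (forward--backward) decomposition $p_S = Z^{-1} p_F \beta$, where $\beta(t,x)$ is the likelihood of the future observations, derive the backward Kolmogorov equation $\partial_t \beta = -\mathcal{G}[\beta]$ from the martingale property of $\beta(t,X(t))$ via It\^o's formula, and finish with the product rule. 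The two derivations are at the same level of formality, but they trade off differently: Anderson's argument stays entirely with $p_S$ and $p_F$ at the cost of manipulating Dirac deltas and their images under $\mathcal{G}^a$, whereas yours avoids distributions altogether and makes structurally transparent \emph{why} the term $\mathcal{G}[p_S/p_F]$ involves the generator rather than its adjoint --- because $p_S/p_F$ is proportional to a conditional expectation of future data, an object governed by the backward equation, which (as you correctly emphasise) is intrinsic to the diffusion and independent of the base measure $\lambda$. Two points you flag as needing care are indeed the only ones: the constancy of $Z$ in $t$, which holds between measurement times because the past/future partition of the observations is then fixed (and this restriction is implicit in the paper's statement as well, since $p_F$ jumps at measurement times), and the regularity of $\beta$ needed for It\^o's formula, which is no stronger an assumption than the formal manipulations the paper itself performs.
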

While the proof of Theorem \ref{thm:formal_smoothing} is the same as in \cite{Anderson1972}, \emph{mutatis mutandis}, it is given in the following.
\begin{proof}
By the Markov property we have
\begin{equation*}
\begin{split}
&p_S(t,x) = \int_\mathbb{X} p_S(t,x \mid X(t+\dif t) = z) p_S(t+\dif t,z) \dif \lambda(z) \\
&=\int_\mathbb{X} p_F(t,x \mid X(t+\dif t) = z, \dif \mathscr{Y}(t)) p_S(t+\dif t,z) \dif \lambda(z)\\
&= \int_\mathbb{X} p_F(t,x \mid X(t+\dif t) = z, \dif \mathscr{Y}(t))\\
&\quad\times\big( p_S(t,z) + \partial_t p_S(t,z) \dif t \big)\dif \lambda(z) + o(\dif t).
\end{split}
\end{equation*}
The first term in this integral can be evaluated using Bayes' rule
\begin{equation}
\begin{split}
&p_F(t,x \mid X(t+\dif t) = z, \dif \mathscr{Y}(t)) \\
&= \frac{p(t+\dif t,z \mid X(t) = x, \dif \mathscr{Y}(t)) p_F(t,x\mid \dif\mathscr{Y}(t))}{p_F(t+\dif t,z)} \\
&= \frac{p(t+\dif t,z \mid X(t) = x ) p_F(t,x)}{p_F(t+\dif t,z)} \\
&=  \frac{\big( \delta(z-x) + \mathcal{G}^a[\delta(z-x)] \dif t \big) p_F(t,x)}{p_F(t+\dif t,z)} + o(\dif t),
\end{split}
\end{equation}
where the second equality follows from the fact that $\dif \mathscr{Y}(t) = \{y(t_n) \colon t \leq  t_n \leq t + \dif t \}$ is empty for sufficiently small $\dif t$. The last equality follows from Eq. \eqref{eq:fokker}.
Consequently
\begin{equation*}
\begin{split}
&p_S(t,x) = \frac{p_F(t,x)}{p_F(t+\dif t,x)} \big( p_S(t,x) + \partial_t p_S(t,x) \dif t  \big)\\
&+ p_F(t,x) \int_\mathbb{X} \mathcal{G}^a[\delta(z-x)] \frac{p_S(t,z)}{p_F(t+\dif t,z)} \dif t \dif \lambda(z)
+ o(\dif t)\\
&= p_S(t,x) + \partial_t p_S(t,x)\dif t - \frac{p_S(t,x)}{p_F(t,x)} \mathcal{G}^a[p_F](t,x) \dif t \\
&+ \mathcal{G}\Big[ \frac{p_S}{p_F}\Big](t,x)\dif t + o(\dif t),
\end{split}
\end{equation*}
and the conclusion follows.
\end{proof}

\section{The Projection Method}\label{sec:projection}
In the following the differential geometric setup of \cite{Brigo1999} is reviewed. Consider the metric space of square root densities $\mathcal{P}^{1/2}(\mathbb{X}) \subset \mathcal{L}_2(\mathbb{X},\lambda)$, for which the Hellinger metric is induced by the $\mathcal{L}_2(\mathbb{X},\lambda)$ norm. Furthermore, consider an $m$-dimensional manifold in $\mathcal{P}^{1/2}(\mathbb{X})$ with one global and smoothing coordinate chart
\begin{equation}
\mathcal{P}_\Theta^{1/2} = \{ \sqrt{p_\theta}, \quad \theta \in \Theta \subset \mathbb{R}^m \}.
\end{equation}
The tangent space at $\sqrt{p_\theta}$ is a closed subspace of $\mathcal{L}_2(\mathbb{X},\lambda)$, which is given by
\begin{equation}
\mathcal{T}_{\sqrt{p_\theta}} \mathcal{P}_\Theta^{1/2} = \operatorname{span}\{ \partial_1^\theta \sqrt{p_\theta}, \ldots \partial_m^\theta \sqrt{p_\theta} \}
\end{equation}
and the inner product  between basis elements of the tangent space is given by
\begin{equation}
\langle \partial_i^\theta \sqrt{p_\theta} , \partial_j^\theta \sqrt{p_\theta}\rangle = \frac{g(\theta)}{4},
\end{equation}
where $g(\theta)$ is the Fisher information matrix. The projection $\Pi_\theta \colon \mathcal{L}_2(\mathbb{X},\lambda) \mapsto \mathcal{T}_{\sqrt{p_\theta}} \mathcal{P}_\Theta^{1/2}$ is given by
\begin{equation}\label{eq:tp_projection}
\Pi_\theta v = 4 \sum_{i,j} g^{-1}_{i,j}(\theta) \langle v , \partial_j^\theta \sqrt{p_\theta}\rangle \partial_i^\theta \sqrt{p_\theta}.
\end{equation}
For particular forms of $v$ the projection formula simplifies according to the following Lemma \cite{Brigo1999}.
\begin{lemma}\label{lem:brigo}
Let $u \in \mathcal{L}_2(\mathbb{X},\lambda)$ satisfy $\E_\theta[\abs{u}] < \infty$. Then the projection of $v = \frac{1}{2}\sqrt{p_\theta} u$ onto $\mathcal{T}_{\sqrt{p_\theta}}$ is given by
\begin{equation}\label{eq:tp_projection2}
\Pi_\theta v =  \E_\theta[u \nabla_\theta \log p_\theta]^\T g^{-1}(\theta) \nabla_\theta \sqrt{p_\theta}.
\end{equation}
\end{lemma}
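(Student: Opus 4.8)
The plan is to reduce the entire statement to the elementary log-derivative identity
\begin{equation*}
\partial_j^\theta \sqrt{p_\theta} = \frac{1}{2}\sqrt{p_\theta}\,\partial_j^\theta \log p_\theta,
\end{equation*}
which follows from the chain rule, since $\partial_j^\theta \sqrt{p_\theta} = (2\sqrt{p_\theta})^{-1}\partial_j^\theta p_\theta$ and $\partial_j^\theta \log p_\theta = p_\theta^{-1}\partial_j^\theta p_\theta$. The strategy is then to insert this identity into the inner products that appear in the general projection formula \eqref{eq:tp_projection}, turn the resulting $\mathcal{L}_2(\mathbb{X},\lambda)$ integrals into expectations under $p_\theta$, and collect terms.

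First I would evaluate the coefficients $\langle v, \partial_j^\theta\sqrt{p_\theta}\rangle$ for $v = \frac{1}{2}\sqrt{p_\theta}\,u$. Substituting the identity for $\partial_j^\theta\sqrt{p_\theta}$, the two factors of $\sqrt{p_\theta}$ combine into the density $p_\theta$, so that
\begin{equation*}
\langle v, \partial_j^\theta\sqrt{p_\theta}\rangle
= \frac{1}{4}\int_\mathbb{X} u\,\partial_j^\theta\log p_\theta\,p_\theta\dif\lambda
= \frac{1}{4}\E_\theta[u\,\partial_j^\theta\log p_\theta].
\end{equation*}

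Next I would substitute this back into \eqref{eq:tp_projection}, where the prefactor $4$ cancels the $\frac14$, leaving $\Pi_\theta v = \sum_{i,j} g^{-1}_{i,j}(\theta)\,\E_\theta[u\,\partial_j^\theta\log p_\theta]\,\partial_i^\theta\sqrt{p_\theta}$. Writing $\nabla_\theta\log p_\theta$ and $\nabla_\theta\sqrt{p_\theta}$ for the column vectors with entries $\partial_j^\theta\log p_\theta$ and $\partial_i^\theta\sqrt{p_\theta}$, and using the symmetry of the inverse Fisher information $g^{-1}(\theta)$, this double sum is exactly the quadratic form $\E_\theta[u\,\nabla_\theta\log p_\theta]^\T g^{-1}(\theta)\,\nabla_\theta\sqrt{p_\theta}$, i.e.\ the claimed expression \eqref{eq:tp_projection2}.

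I expect no deep obstacle here; the mathematical content is the log-derivative identity together with careful index bookkeeping. The one point that requires attention is integrability: the hypotheses ($u\in\mathcal{L}_2(\mathbb{X},\lambda)$ and $\E_\theta[\abs{u}]<\infty$) must be invoked to guarantee that the inner products $\langle v, \partial_j^\theta\sqrt{p_\theta}\rangle$ are well defined and that each expectation $\E_\theta[u\,\partial_j^\theta\log p_\theta]$ appearing in the final formula is finite. I would justify the former by Cauchy--Schwarz against $\partial_j^\theta\sqrt{p_\theta}\in\mathcal{L}_2(\mathbb{X},\lambda)$, so that the formal manipulation of passing from the $\mathcal{L}_2(\mathbb{X},\lambda)$ pairing to an expectation under $p_\theta$ is rigorous.
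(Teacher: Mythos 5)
Your proof is correct and is essentially the argument the paper relies on: the paper itself gives no proof of this lemma, deferring to the cited reference \cite{Brigo1999}, and the computation there is exactly yours---substitute $\partial_j^\theta\sqrt{p_\theta}=\tfrac{1}{2}\sqrt{p_\theta}\,\partial_j^\theta\log p_\theta$ into Eq.~\eqref{eq:tp_projection}, rewrite the $\mathcal{L}_2(\mathbb{X},\lambda)$ inner products as expectations so the factor $4$ cancels the $\tfrac14$, and use the symmetry of $g^{-1}(\theta)$ to collect the double sum into the quadratic form \eqref{eq:tp_projection2}. Your attention to integrability (Cauchy--Schwarz against $\partial_j^\theta\sqrt{p_\theta}$, whose norm is finite since the Fisher information is) is a reasonable handling of the hypotheses and matches the level of rigour of the original.
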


\subsection{Projection Filtering}
It follows from Eq. \eqref{eq:formal_prediction} that $\sqrt{p_F}$ is governed by
\begin{subequations}
\begin{align*}
\mathcal{F}^{1/2}[\phi] = \frac{\phi}{2\phi^2} \mathcal{G}^a[\phi^2], \\
\partial_t \sqrt{p_F} = \mathcal{F}^{1/2}[\sqrt{p_F}].
\end{align*}
\end{subequations}
Let $p_F(t_{n-1},x) \in \mathcal{P}_\Theta$, then the prediction formula from $t_{n-1}$ due to the projection approach is given by \cite{Brigo1999}
\begin{equation}\label{eq:def:projection_prediction}
\partial_t \sqrt{p_{\theta_F}} = \Pi_{\theta_F} \circ \mathcal{F}^{1/2}[\sqrt{p_{\theta_F}}].
\end{equation}

\begin{proposition}\label{prop:projection_prediction_parameter}
The curve in $\Theta$ defined by Eq. \eqref{eq:def:projection_prediction} is given by
\begin{equation}\label{eq:projection_prediction_parameter}
\dot{\theta}_F = g^{-1}(\theta_F) \E_{\theta_F}\big[\mathcal{G}[\nabla_{\theta_F}\log p_{\theta_F}]\big]
\end{equation}
\end{proposition}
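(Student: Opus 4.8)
The plan is to push the prediction dynamics through Lemma \ref{lem:brigo} and then turn the resulting tangent-space expression into an ODE for $\theta_F$ by matching coefficients in the tangent basis. First I would rewrite the driving field in the form demanded by the lemma. From the definition of $\mathcal{F}^{1/2}$ and $\phi^2 = p_{\theta_F}$,
\begin{equation*}
\mathcal{F}^{1/2}[\sqrt{p_{\theta_F}}] = \frac{1}{2\sqrt{p_{\theta_F}}}\,\mathcal{G}^a[p_{\theta_F}] = \frac{1}{2}\sqrt{p_{\theta_F}}\,u, \qquad u \coloneqq \frac{\mathcal{G}^a[p_{\theta_F}]}{p_{\theta_F}},
\end{equation*}
so that $v \coloneqq \mathcal{F}^{1/2}[\sqrt{p_{\theta_F}}]$ has exactly the shape $v = \tfrac12\sqrt{p_{\theta_F}}\,u$ to which Lemma \ref{lem:brigo} applies, provided the integrability hypothesis $\E_{\theta_F}[\abs{u}]<\infty$ holds.

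Second, I would substitute into Eq. \eqref{eq:tp_projection2} and evaluate the expectation. Writing $\E_{\theta_F}[\cdot]$ as integration against $p_{\theta_F}\dif\lambda$, the weight $p_{\theta_F}$ cancels the denominator in $u$, leaving
\begin{equation*}
\E_{\theta_F}[u\,\nabla_{\theta_F}\log p_{\theta_F}] = \int_{\mathbb{X}} \mathcal{G}^a[p_{\theta_F}]\,\nabla_{\theta_F}\log p_{\theta_F}\dif\lambda.
\end{equation*}
The decisive step is then to transfer $\mathcal{G}^a$ onto the other factor using the defining adjoint identity $\langle\mathcal{G}^a[f],h\rangle = \langle f,\mathcal{G}[h]\rangle$ in $\mathcal{L}_2(\mathbb{X},\lambda)$, applied component by component, which gives
\begin{equation*}
\E_{\theta_F}[u\,\nabla_{\theta_F}\log p_{\theta_F}] = \int_{\mathbb{X}} p_{\theta_F}\,\mathcal{G}[\nabla_{\theta_F}\log p_{\theta_F}]\dif\lambda = \E_{\theta_F}\big[\mathcal{G}[\nabla_{\theta_F}\log p_{\theta_F}]\big].
\end{equation*}

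Third, I would read off the ODE. By the chain rule the left-hand side of Eq. \eqref{eq:def:projection_prediction} is $\partial_t\sqrt{p_{\theta_F}} = (\nabla_{\theta_F}\sqrt{p_{\theta_F}})^\T\dot{\theta}_F$, while Eq. \eqref{eq:tp_projection2} together with the previous step writes the right-hand side as $(\nabla_{\theta_F}\sqrt{p_{\theta_F}})^\T g^{-1}(\theta_F)\,\E_{\theta_F}[\mathcal{G}[\nabla_{\theta_F}\log p_{\theta_F}]]$, where I have used the symmetry of $g^{-1}$ to reorder the quadratic form. Because the basis tangent vectors $\partial_1^\theta\sqrt{p_{\theta_F}},\dots,\partial_m^\theta\sqrt{p_{\theta_F}}$ are linearly independent (they span the $m$-dimensional tangent space), the coefficients in these two expansions must coincide, and this yields Eq. \eqref{eq:projection_prediction_parameter}.

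I expect the main obstacle to lie in the rigorous justification of the adjoint transfer rather than in any of the algebra. The definition of $\mathcal{G}^a$ secures $\langle\mathcal{G}^a[f],h\rangle = \langle f,\mathcal{G}[h]\rangle$ only for $h$ in a suitable domain, so one must check that each component of $\nabla_{\theta_F}\log p_{\theta_F}$ lies in the domain of $\mathcal{G}$ and that $\E_{\theta_F}[\abs{u}]<\infty$; equivalently, that no boundary terms are generated when $\mathcal{G}^a$ is moved across. For a compact submanifold $\mathbb{X}$ without boundary and smooth exponential-family densities -- as in the von Mises--Fisher application -- these conditions hold automatically, and the remaining manipulations are routine.
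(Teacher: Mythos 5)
Your proposal is correct and follows essentially the same route as the paper's own proof: identify $\mathcal{F}^{1/2}[\sqrt{p_{\theta_F}}]$ as $\tfrac{1}{2}\sqrt{p_{\theta_F}}\,u$ with $u = \mathcal{G}^a[p_{\theta_F}]/p_{\theta_F}$, apply Lemma \ref{lem:brigo}, move $\mathcal{G}^a$ onto $\nabla_{\theta_F}\log p_{\theta_F}$ via the adjoint identity, and match coefficients against the chain-rule expansion $\dot{\theta}_F^\T \nabla_{\theta_F}\sqrt{p_{\theta_F}}$. Your version is merely more explicit about the adjoint-transfer step, the linear independence underlying the coefficient matching, and the domain/integrability hypotheses, all of which the paper leaves implicit.
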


Proposition \ref{prop:projection_prediction_parameter} was proved in \cite{Brigo1999} for the case when $p_F$ is a density with respect to the Lebesgue measure. The proof method is essentially the same but is given below for completeness. 

\begin{proof}
It follows from Lemma \ref{lem:brigo} that
\begin{equation*}
\begin{split}
\partial_t \sqrt{p_{\theta_F}} &= \Pi_{\theta_F} \circ \mathcal{F}^{1/2}[\sqrt{p_{\theta_F}}] \\
&= \E_{\theta_F}\Big[ \frac{\mathcal{G}^a[p_{\theta_F}]}{p_{\theta_F}} \nabla_{\theta_F} \log p_{\theta_F} \Big] g^{-1}(\theta_F) \nabla_{\theta_F}  \sqrt{p_{\theta_F}} \\
&= \E_{\theta_F}\big[\mathcal{G}[\nabla_{\theta_F}\log p_{\theta_F}]\big]^\T g^{-1}(\theta_F) \nabla_{\theta_F}   \sqrt{p_{\theta_F}}.
\end{split}
\end{equation*}
On the other hand, by the chain rule $\partial_t \sqrt{p}_{\theta_F} = \dot{\theta}_F^\T \nabla_{\theta_F} \sqrt{p_{\theta_F}}$, matching terms gives the result.

\end{proof}

\begin{corollary}\label{cor:projection_prediction_parameter}
Let $\mathcal{P}_\Theta$ be an exponential family, $p_\theta \propto \exp(\theta^\T s(x) - \psi(\theta))$. Then Eq. \eqref{eq:projection_prediction_parameter} reduces to
\begin{equation}
\dot{\theta}_F = g^{-1}(\theta_F) \E_{\theta_F}\big[\mathcal{G}[s]\big].
\end{equation}
\end{corollary}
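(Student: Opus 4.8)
The plan is to substitute the exponential-family form into the prediction equation of Proposition~\ref{prop:projection_prediction_parameter} and to exploit the structure of the generator. First I would compute the score. Writing $\log p_\theta = \theta^\T s(x) - \psi(\theta)$ (with $\psi$ absorbing any $x$-dependent base factor, which does not depend on $\theta$ and hence does not affect the $\theta$-gradient), differentiation in $\theta$ gives
\begin{equation*}
\nabla_\theta \log p_\theta = s(x) - \nabla_\theta \psi(\theta).
\end{equation*}
The key observation is that $\nabla_\theta \psi(\theta)$ is a function of $\theta$ alone and is therefore constant in the spatial variable $x$ on which $\mathcal{G}$ acts.

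Next I would recall that $\mathcal{G}$ in Eq.~\eqref{eq:generator} contains only first- and second-order spatial derivatives and no zeroth-order term, so it is linear and annihilates every function that is constant in $x$. Applying it componentwise to the score therefore yields
\begin{equation*}
\mathcal{G}[\nabla_\theta \log p_\theta] = \mathcal{G}[s] - \mathcal{G}[\nabla_\theta \psi(\theta)] = \mathcal{G}[s],
\end{equation*}
since the second term vanishes. Taking the expectation under $p_{\theta_F}$ and multiplying by $g^{-1}(\theta_F)$ then turns Eq.~\eqref{eq:projection_prediction_parameter} directly into the claimed identity $\dot\theta_F = g^{-1}(\theta_F)\,\E_{\theta_F}[\mathcal{G}[s]]$.

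I expect no serious obstacle here; the only points demanding a little care are bookkeeping ones. One must treat $\nabla_\theta \log p_\theta$ and $\mathcal{G}[\cdot]$ componentwise, so that $\mathcal{G}[s]$ denotes the vector with entries $\mathcal{G}[s_k]$, and confirm that the sufficient statistic $s$ is smooth enough for $\mathcal{G}[s]$ to be defined and for $\E_{\theta_F}[\mathcal{G}[s]]$ to be finite, which is implicit in the integrability hypothesis of Lemma~\ref{lem:brigo} that underpins Proposition~\ref{prop:projection_prediction_parameter}. The essential content is simply that, for an exponential family, the $\theta$-dependence of the score enters as an additive term that is constant in $x$ and is therefore removed by the generator.
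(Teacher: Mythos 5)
Your proposal is correct and matches the paper's approach: the paper's proof is simply ``It follows from Proposition~\ref{prop:projection_prediction_parameter} and direct calculation,'' and your argument is exactly that calculation spelled out, namely that the score of the exponential family is $s(x) - \nabla_\theta \psi(\theta)$ and the generator $\mathcal{G}$, having no zeroth-order term, annihilates the part that is constant in $x$, leaving $\E_{\theta_F}[\mathcal{G}[s]]$.
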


\begin{proof}
It follows from Proposition \ref{prop:projection_prediction_parameter} and direct calculation.
\end{proof}

For the filter update, it is herein assumed that $\mathcal{P}_\Theta$ is a conjugate family for the likelihoods. For a projection approach to the filter update, see  \cite{Tronarp2019b}.

\subsection{Projection Smoothing}
It follows from Theorem \ref{thm:formal_smoothing} that $\sqrt{p_S}$ is governed by
\begin{subequations}
\begin{align*}
\mathcal{B}^{1/2}[\phi] &= \frac{\phi}{2\phi^2} \Big(  \frac{\phi^2}{p_F} \mathcal{G}^a[p_F] - p_F \mathcal{G}\Big[ \frac{\phi^2}{p_F} \Big]\Big), \\
\partial_t \sqrt{p_S} &= \mathcal{B}^{1/2}[\sqrt{p_S}].
\end{align*}
\end{subequations}
In order to arrive at a tractable algorithm, similarly to \cite{Koyama2018}, the operator $\mathcal{B}^{1/2}$ is approximated by
\begin{equation}
\widehat{\mathcal{B}}^{1/2}[\phi] = \frac{\phi}{2\phi^2} \Big(  \frac{\phi^2}{p_{\theta_F}} \mathcal{G}^a[p_{\theta_F}] - p_{\theta_F} \mathcal{G}\Big[ \frac{\phi^2}{p_{\theta_F}} \Big]\Big)
\end{equation}
and the projection smoother is given by
\begin{equation}\label{eq:def:projection_smoother}
\partial_t \sqrt{p_{\theta_S}} = \Pi_{\theta_S} \circ \widehat{\mathcal{B}}^{1/2}[\sqrt{p_{\theta_S}}].
\end{equation}
Proposition \ref{prop:projection_smoother_parameter} was proved in \cite{Koyama2018} for the case when $p_F$ and $p_S$ are densities with respect to the Lebesgue measure. The proof method is essentially the same but is given below for completeness. 
\begin{proposition}\label{prop:projection_smoother_parameter}
The curve in $\Theta$ defined by Eq. \eqref{eq:def:projection_smoother} is given by
\begin{equation}\label{eq:projection_smoother_parameter}
\begin{split}
\dot{\theta}_S &=    g^{-1}(\theta_S) \E_{\theta_S}\Big[ \nabla_{\theta_S} \frac{p_{\theta_F}}{p_{\theta_S}} \mathcal{G}\Big[\frac{p_{\theta_S}}{p_{\theta_F}} \Big]  \Big].
\end{split}
\end{equation}
\end{proposition}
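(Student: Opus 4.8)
The plan is to mirror the proof of Proposition \ref{prop:projection_prediction_parameter}: I would apply Lemma \ref{lem:brigo} to the right-hand side of the defining equation \eqref{eq:def:projection_smoother} and then read off $\dot{\theta}_S$ by matching against the chain-rule identity $\partial_t \sqrt{p_{\theta_S}} = \dot{\theta}_S^\T \nabla_{\theta_S} \sqrt{p_{\theta_S}}$. First I would put $\widehat{\mathcal{B}}^{1/2}[\sqrt{p_{\theta_S}}]$ into the form required by the lemma. Setting $\phi = \sqrt{p_{\theta_S}}$ so that $\phi^2 = p_{\theta_S}$ and factoring out $\tfrac12\sqrt{p_{\theta_S}}$ gives $\widehat{\mathcal{B}}^{1/2}[\sqrt{p_{\theta_S}}] = \tfrac12 \sqrt{p_{\theta_S}}\, u$ with
\[
u = \frac{\mathcal{G}^a[p_{\theta_F}]}{p_{\theta_F}} - \frac{p_{\theta_F}}{p_{\theta_S}} \mathcal{G}\Big[ \frac{p_{\theta_S}}{p_{\theta_F}} \Big].
\]
Lemma \ref{lem:brigo} then yields $\Pi_{\theta_S} \circ \widehat{\mathcal{B}}^{1/2}[\sqrt{p_{\theta_S}}] = \E_{\theta_S}[u\, \nabla_{\theta_S} \log p_{\theta_S}]^\T g^{-1}(\theta_S) \nabla_{\theta_S} \sqrt{p_{\theta_S}}$, and matching coefficients against the chain rule, using symmetry of $g$, gives $\dot{\theta}_S = g^{-1}(\theta_S)\, \E_{\theta_S}[u\, \nabla_{\theta_S} \log p_{\theta_S}]$.

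The substance of the proof is then to show that $\E_{\theta_S}[u\, \nabla_{\theta_S} \log p_{\theta_S}]$ coincides with the compact expectation in \eqref{eq:projection_smoother_parameter}, where $\nabla_{\theta_S}$ acts on the whole product $\tfrac{p_{\theta_F}}{p_{\theta_S}}\mathcal{G}[\tfrac{p_{\theta_S}}{p_{\theta_F}}]$. I would write $\E_{\theta_S}[u\, \nabla_{\theta_S} \log p_{\theta_S}] = \int_\mathbb{X} u\, \nabla_{\theta_S} p_{\theta_S} \dif \lambda$ and split along the two terms of $u$. The second term is immediately $-\int_\mathbb{X} \tfrac{p_{\theta_F}}{p_{\theta_S}}\mathcal{G}[\tfrac{p_{\theta_S}}{p_{\theta_F}}]\nabla_{\theta_S} p_{\theta_S}\dif\lambda$, which is precisely the $\big(\nabla_{\theta_S}\tfrac{p_{\theta_F}}{p_{\theta_S}}\big)\mathcal{G}[\tfrac{p_{\theta_S}}{p_{\theta_F}}]$ contribution in the product-rule expansion of the target. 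For the first term I would invoke the defining property of the adjoint in $\mathcal{L}_2(\mathbb{X},\lambda)$, namely $\int_\mathbb{X} \mathcal{G}^a[p_{\theta_F}]\, h \dif\lambda = \int_\mathbb{X} p_{\theta_F}\, \mathcal{G}[h]\dif\lambda$ with $h = \nabla_{\theta_S} p_{\theta_S}/p_{\theta_F}$, and then commute $\nabla_{\theta_S}$ past $\mathcal{G}$, which is legitimate because $\mathcal{G}$ differentiates in $x$ with coefficients independent of $\theta_S$. This turns the first term into $\int_\mathbb{X} \tfrac{p_{\theta_F}}{p_{\theta_S}}\mathcal{G}\big[\nabla_{\theta_S}\tfrac{p_{\theta_S}}{p_{\theta_F}}\big] p_{\theta_S}\dif\lambda$, the remaining half of the product-rule expansion of $\nabla_{\theta_S}\big(\tfrac{p_{\theta_F}}{p_{\theta_S}}\mathcal{G}[\tfrac{p_{\theta_S}}{p_{\theta_F}}]\big)$. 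Recombining the two pieces then gives \eqref{eq:projection_smoother_parameter}.

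The hard part will be the bookkeeping in the last step: correctly reassembling the two integrals into a single gradient of a product, which hinges on noticing that the adjoint identity together with the commutation of $\nabla_{\theta_S}$ and $\mathcal{G}$ is exactly what is needed to reverse the product rule. As in \cite{Anderson1972,Brigo1999,Koyama2018}, the manipulations are purely formal; integrability, the interchange of $\nabla_{\theta_S}$ with $\int_\mathbb{X}$, and a boundary-term-free application of the adjoint on $\mathbb{X}$ are assumed throughout.
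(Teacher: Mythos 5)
Your proposal is correct and takes essentially the same route as the paper's proof: apply Lemma \ref{lem:brigo} to $\widehat{\mathcal{B}}^{1/2}[\sqrt{p_{\theta_S}}]$, use the $\mathcal{L}_2(\mathbb{X},\lambda)$-adjoint property of $\mathcal{G}$ together with the interchange of $\nabla_{\theta_S}$ with $\mathcal{G}$ and the integral, reassemble via the product rule, and match against the chain rule $\partial_t \sqrt{p_{\theta_S}} = \dot{\theta}_S^\T \nabla_{\theta_S}\sqrt{p_{\theta_S}}$. The only difference is bookkeeping order: the paper applies the adjoint to the scalar $p_{\theta_S}/p_{\theta_F}$ after pulling $\nabla_{\theta_S}$ outside the expectation (so the spurious score term cancels the second expectation), whereas you apply it to $\nabla_{\theta_S} p_{\theta_S}/p_{\theta_F}$ directly and then reverse the product rule --- the same algebra read in a different direction.
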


\begin{proof}
It follows from Lemma \ref{lem:brigo} that
\begin{equation*}
\begin{split}
&\partial_t \sqrt{p_{\theta_S}} = \Pi_{\theta_S} \circ \widehat{\mathcal{B}}^{1/2}[\sqrt{p_{\theta_S}}] \\
&= \E_{\theta_S}\Bigg[ \frac{\mathcal{G}^a[p_{\theta_F}]}{p_{\theta_F}} \nabla_{\theta_S} \log p_{\theta_S} \Bigg]^\T g^{-1}(\theta_S)  \nabla_{\theta_S} \sqrt{p_{\theta_S}} \\
&- \E_{\theta_S}\Bigg[  \frac{p_{\theta_F}}{p_{\theta_S}}   \mathcal{G}\Big[ \frac{p_{\theta_S}}{p_{\theta_F}}\Big] \nabla_{\theta_S} \log p_{\theta_S} \Bigg]^\T g^{-1}(\theta_S)  \nabla_{\theta_S} \sqrt{p_{\theta_S}}.
\end{split}
\end{equation*}
The first expectation can be simplified according to
\begin{equation*}
\begin{split}
&\E_{\theta_S}\Bigg[ \frac{\mathcal{G}^a[p_{\theta_F}]}{p_{\theta_F}} \nabla_{\theta_S} \log p_{\theta_S} \Bigg] = \nabla_{\theta_S} \E_{\theta_S}\Bigg[ \frac{\mathcal{G}^a[p_{\theta_F}]}{p_{\theta_F}}\Bigg] \\
&=   \nabla_{\theta_S} \E_{\theta_F}\Bigg[ \mathcal{G}\Big[\frac{p_{\theta_S}}{p_{\theta_F}} \Big]  \Bigg] = \nabla_{\theta_S} \E_{\theta_S}\Bigg[ \frac{p_{\theta_F}}{p_{\theta_S}} \mathcal{G}\Big[\frac{p_{\theta_S}}{p_{\theta_F}} \Big]  \Bigg] \\
&= \E_{\theta_S}\Bigg[ \nabla_{\theta_S} \frac{p_{\theta_F}}{p_{\theta_S}} \mathcal{G}\Big[\frac{p_{\theta_S}}{p_{\theta_F}} \Big]  \Bigg]
\\
&\quad+ \E_{\theta_S}\Bigg[ \frac{p_{\theta_F}}{p_{\theta_S}} \mathcal{G}\Big[\frac{p_{\theta_S}}{p_{\theta_F}} \Big] \nabla_{\theta_S} \log p_{\theta_S}  \Bigg].
\end{split}
\end{equation*}
Inserting this into the previous equation gives
\begin{equation*}
\begin{split}
\partial_t \sqrt{p_{\theta_S}} &= \E_{\theta_S}\Big[ \nabla_{\theta_S} \frac{p_{\theta_F}}{p_{\theta_S}} \mathcal{G}\Big[\frac{p_{\theta_S}}{p_{\theta_F}} \Big]  \Big]^\T g^{-1}(\theta_S) \nabla_{\theta_S} \sqrt{p_{\theta_S}}
\end{split}
\end{equation*}
and the conclusion follows by the same argument as in the proof of Proposition \ref{prop:projection_prediction_parameter}.
\end{proof}

\begin{corollary}\label{cor:projection_smoother_parameter}
Let $\mathcal{P}_\Theta$ be an exponential family, $p_\theta \propto \exp(\theta^\T s(x) - \psi(\theta))$. Then Eq. \eqref{eq:projection_smoother_parameter} reduces to
\begin{equation}
\dot{\theta}_S = g^{-1}(\theta_S) \Big( \E_{\theta_S}\big[\mathcal{G}[s]\big] +  \E_{\theta_S}[J_s Q J_s^\T](\theta_S - \theta_F) \Big),
\end{equation}
where $J_s$ is the Jacobian of the sufficient statistic $s$.
\end{corollary}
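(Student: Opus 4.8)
The plan is to start from the curve equation in Proposition~\ref{prop:projection_smoother_parameter} and directly evaluate its integrand $h := \frac{p_{\theta_F}}{p_{\theta_S}}\mathcal{G}\big[\frac{p_{\theta_S}}{p_{\theta_F}}\big]$ under the exponential-family assumption, exploiting that the log-density is affine in the sufficient statistic. Writing $f := \log\frac{p_{\theta_S}}{p_{\theta_F}} = (\theta_S-\theta_F)^\T s + \psi(\theta_F) - \psi(\theta_S)$, the ratio becomes $\frac{p_{\theta_S}}{p_{\theta_F}} = e^{f}$, so the first task is to understand how $\mathcal{G}$ acts on an exponential. Throughout, $\mathcal{G}[s]$ denotes the vector $(\mathcal{G}[s_1],\dots,\mathcal{G}[s_m])^\T$.

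The key computation is the identity $\mathcal{G}[e^{f}] = e^{f}\big(\mathcal{G}[f] + \tfrac12(\nabla f)^\T Q\, \nabla f\big)$, which follows from the chain rule applied to the first- and second-order parts of $\mathcal{G}$ in Eq.~\eqref{eq:generator}. Multiplying by $\frac{p_{\theta_F}}{p_{\theta_S}} = e^{-f}$ cancels the exponential prefactor and leaves $h = \mathcal{G}[f] + \tfrac12(\nabla f)^\T Q\, \nabla f$. Because $f$ is affine in $s$ with an $x$-independent (though $\theta_S$-dependent) offset, the generator annihilates the offset, so $\mathcal{G}[f] = (\theta_S-\theta_F)^\T \mathcal{G}[s]$, while $\nabla_x f = J_s^\T(\theta_S-\theta_F)$ gives $(\nabla f)^\T Q\, \nabla f = (\theta_S-\theta_F)^\T J_s Q J_s^\T(\theta_S-\theta_F)$. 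Hence $h = (\theta_S-\theta_F)^\T\mathcal{G}[s] + \tfrac12(\theta_S-\theta_F)^\T J_s Q J_s^\T(\theta_S-\theta_F)$.

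It then remains to take the parameter gradient $\nabla_{\theta_S} h$ and the expectation $\E_{\theta_S}[\,\cdot\,]$. Differentiating the affine term yields $\mathcal{G}[s]$, and differentiating the quadratic form---using the symmetry of $J_s Q J_s^\T$, which is inherited from $Q = \sigma\sigma^\T$---yields $J_s Q J_s^\T(\theta_S-\theta_F)$, so that $\nabla_{\theta_S} h = \mathcal{G}[s] + J_s Q J_s^\T(\theta_S-\theta_F)$. Taking $\E_{\theta_S}$, noting that $\theta_S-\theta_F$ is nonrandom and hence factors out of the expectation, and substituting into Proposition~\ref{prop:projection_smoother_parameter} produces the stated formula. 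The only place demanding care is the bookkeeping in $\nabla_{\theta_S} h$: one must remember that the $x$-independent offset in $f$ does depend on $\theta_S$ but is killed by $\mathcal{G}$ and therefore never contributes, and that the quadratic term's derivative picks up a factor of two that exactly cancels the $\tfrac12$. I expect no genuine obstacle; the entire content sits in the exponential identity of the second step, and the rest is linear algebra.
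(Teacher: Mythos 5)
Your proof is correct and follows exactly the route the paper intends: the paper's proof of Corollary~\ref{cor:projection_smoother_parameter} is simply ``It follows from Proposition~\ref{prop:projection_smoother_parameter} and direct calculation,'' and your argument is precisely that direct calculation, carried out with the correct reading of $\nabla_{\theta_S}$ as acting on the whole product $\frac{p_{\theta_F}}{p_{\theta_S}}\mathcal{G}\big[\frac{p_{\theta_S}}{p_{\theta_F}}\big]$ (as the paper's own proof of the proposition makes clear). The exponential identity $\mathcal{G}[e^{f}] = e^{f}\big(\mathcal{G}[f] + \tfrac12(\nabla f)^\T Q\,\nabla f\big)$, the annihilation of the $x$-independent offset, and the cancellation of the factor $\tfrac12$ against the derivative of the quadratic form are all exactly the steps the paper leaves to the reader.
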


\begin{proof}

It follows from Proposition \ref{prop:projection_smoother_parameter} and direct calculation.
\end{proof}

\section{Application: Tracking on $\mathbb{S}^2$}\label{sec:application}
Consider the following state space model
\begin{subequations}\label{eq:reference_vector_model}
\begin{align}
\dif &X = - \breve{\Omega}(t) \times X \dif t - \gamma^2 X \dif t+ \gamma X \times \dif W, \label{eq:reference_vector_dynamics}\\
&Y(t_n) \mid X(t_n) \sim \mathcal{N}(g X(t_n), \alpha^2 \mathrm{I}), \label{eq:reference_vector_measurement}
\end{align}
\end{subequations}
where  $\times$ denotes the vector cross-product and $X(t) \in \mathbb{S}^2$ for $t > 0$ if $X(0) \in \mathbb{S}^2$ \cite{tronarp2018b}. The model in Eq. \eqref{eq:reference_vector_model} can be used to track, for example, the local gravity direction by using gyroscope measurements $\breve{\Omega}$, accelerometer measurements $Y$, and setting $g$ to the local gravity constant. For this model $Q$ and $\mathcal{G}$ are given by
\begin{subequations}\label{eq:reference_vector_facts}
\begin{align}
Q(x) &= \gamma^2 ( \mathrm{I} \norm{x}^2 - xx^\T), \\
\mathcal{G}[\phi] &=   -(\breve{\Omega}(t) \times x + \gamma^2 x )^\T\nabla_x \phi + \frac{1}{2} \tr[Q \nabla_x^2 \phi ].
\end{align}
\end{subequations}
In order to develop projection filters and smoothers for this inference problem, a class of densities on the unit sphere needs to be selected. An obvious choice is the von Mises--Fisher family, which is an exponential family with respect to the uniform measure on $\mathbb{S}^2$ \cite{Mardia2000}. The von Mises--Fisher densities on $\mathbb{S}^2$ are given by
\begin{equation}
p_\theta(x) = \exp(\theta^\T x - \kappa(\norm{\theta})),
\end{equation}
where $\kappa(r) = -\log r + \log(4\pi) + \log \sinh r$ and the sufficient statistic is $s(x) = x$. Furthermore, for the von Mises--Fisher distribution the following holds
\cite{Mardia2000,Garcia2019b}
\begin{subequations}\label{eq:vmf_facts}
\begin{align}
\E_\theta[s(X)] &= \E_\theta[X]  = \kappa'(\norm{\theta}) \frac{\theta}{\norm{\theta}}, \\
g(\theta) &=  \frac{\kappa'(\norm{\theta})}{\norm{\theta}} P_\perp(\theta) + \kappa''(\norm{\theta})  P(\theta),  \\
g^{-1}(\theta) &= \frac{\norm{\theta}}{\kappa_3'(\norm{\theta})}P_\perp(\theta) + \frac{1}{\kappa''(\norm{\theta})}P(\theta), \label{eq:vmf:inverse_fisher}
\end{align}
\end{subequations}
where $P(\theta) = \theta \theta^\T /\norm{\theta}^2$ and $P_\perp(\theta) = \mathrm{I} - P(\theta)$.

\subsection{von Mises--Fisher Filtering and Smoothing}
Given the development in Section \ref{sec:projection}, deriving a von Mises--Fisher filter and smoother for the model in Eq. \eqref{eq:reference_vector_model} is straight-forward. The von Mises--Fisher family is conjugate to the likelihood (Proposition 1 in \cite{tronarp2018b}) and the prediction and smoothing formulae are given by Propositions \ref{prop:vmf_prediction} and \ref{prop:vmf_smoother}, respectively.

\begin{proposition}\label{prop:vmf_prediction}
The projection filter based on the von Mises--Fisher have the following prediction equation.
\begin{equation}\label{eq:vmf_prediction}
\dot{\theta}_F = - \breve{\Omega}(t) \times \theta_F - \frac{\gamma^2 \kappa'(\norm{\theta_F})}{\norm{\theta_F}\kappa''(\norm{\theta_F})} \theta_F
\end{equation}
\end{proposition}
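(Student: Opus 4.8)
The plan is to specialise the exponential-family prediction formula of Corollary~\ref{cor:projection_prediction_parameter} to the von Mises--Fisher family, whose sufficient statistic is the identity map $s(x) = x$. The task then reduces to evaluating $\E_{\theta_F}[\mathcal{G}[s]]$ and left-multiplying by the inverse Fisher information $g^{-1}(\theta_F)$ from Eq.~\eqref{eq:vmf:inverse_fisher}.

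First I would apply the generator in Eq.~\eqref{eq:reference_vector_facts} to $s$ componentwise. Since each component $s_k(x) = x_k$ is linear, its gradient is a constant coordinate vector and its Hessian vanishes, so the diffusion contribution $\tfrac12\tr[Q\nabla_x^2 s_k]$ drops out entirely and only the drift survives. Stacking over $k$ yields the vector identity $\mathcal{G}[s](x) = -\breve{\Omega}(t)\times x - \gamma^2 x$. Taking the expectation under the von Mises--Fisher law and inserting $\E_\theta[X] = \kappa'(\norm{\theta})\theta/\norm{\theta}$ from Eq.~\eqref{eq:vmf_facts} gives
\begin{equation*}
\E_{\theta_F}[\mathcal{G}[s]] = -\frac{\kappa'(\norm{\theta_F})}{\norm{\theta_F}}\,\breve{\Omega}(t)\times\theta_F \;-\; \gamma^2\frac{\kappa'(\norm{\theta_F})}{\norm{\theta_F}}\,\theta_F.
\end{equation*}

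The crux is the orthogonal decomposition of this expectation relative to $\theta_F$, which lets the action of $g^{-1}(\theta_F)$ be read off from its spectral form $\tfrac{\norm{\theta_F}}{\kappa'(\norm{\theta_F})}P_\perp(\theta_F) + \tfrac{1}{\kappa''(\norm{\theta_F})}P(\theta_F)$. The second term above is parallel to $\theta_F$, hence lies in the range of $P(\theta_F)$ and is killed by $P_\perp(\theta_F)$; the first term is a cross product with $\theta_F$ and is therefore orthogonal to $\theta_F$, hence lies in the range of $P_\perp(\theta_F)$ and is killed by $P(\theta_F)$. Applying $g^{-1}(\theta_F)$ to each piece, the factor $\kappa'(\norm{\theta_F})/\norm{\theta_F}$ in the perpendicular (cross-product) term cancels against the $P_\perp$ eigenvalue $\norm{\theta_F}/\kappa'(\norm{\theta_F})$ to leave exactly $-\breve{\Omega}(t)\times\theta_F$, while the parallel term is scaled by $1/\kappa''(\norm{\theta_F})$ to give $-\gamma^2\kappa'(\norm{\theta_F})/(\norm{\theta_F}\kappa''(\norm{\theta_F}))\,\theta_F$. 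Summing the two contributions reproduces Eq.~\eqref{eq:vmf_prediction}.

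I expect no deep obstacle here, only bookkeeping: the single point requiring care is confirming that the cross-product term is genuinely orthogonal to $\theta_F$, so that it is untouched by $P(\theta_F)$, and then matching the scalar prefactors so that the $\kappa'$ factors cancel cleanly in the perpendicular direction.
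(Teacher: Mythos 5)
Your proposal is correct and follows exactly the route the paper intends: the paper's proof is precisely the ``direct calculation'' from Corollary~\ref{cor:projection_prediction_parameter} using Eqs.~\eqref{eq:reference_vector_facts} and \eqref{eq:vmf_facts}, which you have simply written out in full (linearity of $s$ killing the diffusion term, then the $P$/$P_\perp$ decomposition of $\E_{\theta_F}[\mathcal{G}[s]]$ under $g^{-1}(\theta_F)$). Note also that your reading of the $\kappa_3'$ appearing in Eq.~\eqref{eq:vmf:inverse_fisher} as $\kappa'$ is the correct one, since only then does $g(\theta)g^{-1}(\theta) = \mathrm{I}$ hold.
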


\begin{proof}
It follows from Corollary \ref{cor:projection_prediction_parameter} by direct calculation using  Eq. \eqref{eq:reference_vector_facts} and Eq. \eqref{eq:vmf_facts}.
\end{proof}

\begin{remark}
By the change of parameters $\mu = \theta/\norm{\theta}$ and $\beta = \norm{\theta}$, it follows from Eq. \eqref{eq:vmf_prediction} that
\begin{subequations}
\begin{align}
\dot{\mu}_F &= - \breve{\Omega}(t) \times \mu_F, \\
\dot{\beta}_F &= - \frac{\gamma^2 \kappa'(\beta_F)}{\kappa''(\beta_F)},
\end{align}
\end{subequations}
which are the prediction equations for the filter in \cite{tronarp2018b}.
\end{remark}

\begin{proposition}\label{prop:vmf_smoother}
The von Mises--Fisher projection smoother formula is given by
\begin{subequations}
\begin{align}
G(\theta) &=  \frac{\gamma^2\norm{\theta}}{\kappa'(\norm{\theta})}P_\perp(\theta)
+  \frac{\gamma^2[1 - (\kappa'(\norm{\theta}))^2]}{\kappa''(\norm{\theta})}P(\theta) - \gamma^2 \mathrm{I},  \nonumber \\
\begin{split}
\dot{\theta}_S &= - \breve{\Omega}(t) \times \theta_S  - \frac{\gamma^2 \kappa'(\norm{\theta_S})}{\norm{\theta_S} \kappa''(\norm{\theta_S})} \theta_S\\
&\quad+ G(\theta_S) \big(\theta_S - \theta_F\big). \label{eq:vmf_smoother}
\end{split}
\end{align}
\end{subequations}
\end{proposition}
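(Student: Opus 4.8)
The plan is to specialise Corollary~\ref{cor:projection_smoother_parameter} to the von Mises--Fisher family and then evaluate the two resulting expectations using the moment identities in Eq.~\eqref{eq:vmf_facts}. Since the sufficient statistic is $s(x) = x$, its Jacobian is $J_s = \mathrm{I}$, so $J_s Q J_s^\T = Q$ and the smoother drift becomes $\dot\theta_S = g^{-1}(\theta_S)\E_{\theta_S}[\mathcal{G}[s]] + g^{-1}(\theta_S)\E_{\theta_S}[Q](\theta_S - \theta_F)$. The first summand is exactly the right-hand side of the prediction equation evaluated at $\theta_S$: the computation behind Proposition~\ref{prop:vmf_prediction} gives $g^{-1}(\theta)\E_\theta[\mathcal{G}[s]] = -\breve\Omega(t)\times\theta - \gamma^2\kappa'(\norm{\theta})\theta/(\norm{\theta}\kappa''(\norm{\theta}))$, which supplies the first two terms of Eq.~\eqref{eq:vmf_smoother}. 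It therefore remains only to identify $G(\theta_S) = g^{-1}(\theta_S)\E_{\theta_S}[Q]$.

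First I would compute $\E_\theta[Q]$. On $\mathbb{S}^2$ we have $\norm{x} = 1$, so by Eq.~\eqref{eq:reference_vector_facts} $Q(x) = \gamma^2(\mathrm{I} - xx^\T)$ and hence $\E_\theta[Q] = \gamma^2(\mathrm{I} - \E_\theta[XX^\T])$. The key step is to obtain the second-moment matrix $\E_\theta[XX^\T]$ without integrating over the sphere directly: because the von Mises--Fisher family is an exponential family with sufficient statistic $x$, the Fisher information coincides with the covariance of that statistic, $g(\theta) = \operatorname{Cov}_\theta(X) = \E_\theta[XX^\T] - \E_\theta[X]\E_\theta[X]^\T$. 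Combining this with the mean and Fisher information in Eq.~\eqref{eq:vmf_facts} gives $\E_\theta[XX^\T] = g(\theta) + (\kappa'(\norm{\theta}))^2 P(\theta)$, and substituting the expression for $g(\theta)$ then yields a clean decomposition of $\E_\theta[Q]$ along the complementary projectors $P(\theta)$ and $P_\perp(\theta)$.

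Finally, I would left-multiply by the inverse Fisher information from Eq.~\eqref{eq:vmf:inverse_fisher}. Because $P(\theta)$ and $P_\perp(\theta)$ are orthogonal idempotents with $P(\theta)P_\perp(\theta) = 0$, the product $g^{-1}(\theta)\E_\theta[Q]$ collapses coefficient-wise onto the two projectors; rewriting $\mathrm{I} = P(\theta) + P_\perp(\theta)$ to reorganise the resulting scalars reproduces the stated $G(\theta)$. The same observation already exploited in Proposition~\ref{prop:vmf_prediction}, namely that $\breve\Omega(t)\times\theta$ lies in the range of $P_\perp(\theta)$ while $\theta$ lies in that of $P(\theta)$, makes the application of $g^{-1}$ immediate throughout. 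I expect the only genuine obstacle to be the evaluation of $\E_\theta[XX^\T]$; everything else is linear algebra in the eigenbasis of the projectors. Invoking the exponential-family covariance identity is precisely what circumvents a direct spherical integral and renders the remaining calculation routine.
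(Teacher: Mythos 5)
Your proposal is correct and takes essentially the same route as the paper: the paper's proof is exactly the ``direct calculation'' from Corollary~\ref{cor:projection_smoother_parameter} using Eqs.~\eqref{eq:reference_vector_facts} and \eqref{eq:vmf_facts} that you carry out, with the exponential-family identity $g(\theta) = \operatorname{Cov}_\theta(X)$ being the natural way to obtain $\E_\theta[XX^\T]$ given that $g(\theta)$ is supplied in Eq.~\eqref{eq:vmf_facts}. Your projector-wise evaluation of $g^{-1}(\theta)\E_\theta[Q]$ indeed reproduces the stated $G(\theta)$, so there is no gap.
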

\begin{proof}
It follows from Corollary \ref{cor:projection_smoother_parameter} by direct calculation using  Eq. \eqref{eq:reference_vector_facts} and Eq. \eqref{eq:vmf_facts}.
\end{proof}

\subsection{Experimental Results}
The von Mises--Fisher filter and smoother (VMFF/VMFS) are evaluated on the model in Eq. \eqref{eq:reference_vector_model}, $g = 9.82$ corresponding to tracking the local gravity vector. The system is simulated one hundred times for all combinations of $s \in \{10^{-3},10^{-2}\}$ and $\alpha^2 \in \{10^{-3},10^{-2}\}$ with sample rate for $Y$ and $\breve{\Omega}$ at $100 \mathrm{Hz}$. The coordinates of $\breve{\Omega}$ are governed by zero mean Ornstein--Uhlenbeck processes with mean reversion rate $-5$ and diffusion constant $2.5$.

The von Mises--Fisher filter and smoother estimate the state by the mode $\theta/ \norm{\theta}$ and are compared against a Gaussian filter and  a Gaussian smoother (GF/GS) (Type II, see \cite{SarkkaSarmavuori2013}), which use norm-constrained minimum mean-square estimators. The von Mises--Fisher filter and smoother are initialised by the uniform distribution on $\mathbb{S}^2$ and the Gaussian estimators are initialised by moment matching the uniform distribution on $\mathbb{S}^2$.

The mean angular error is listed in Table \ref{tab:results} for the different parameters and methods. Clearly, there appears to be an advantage to appropriately accounting for the geometry of the state space, particularly as the system becomes noisier, where the Gaussian smoother can in fact produce worse estimates than the Gaussian filter.

\begin{table}\caption{Mean angular error (degrees).}\label{tab:results}
\centering
\begin{tabular}{|l|c|c|c|c|}\hline
$(\alpha^2,s)$ & VMFF & VMFS & GF & GS \\ \hline
$(10^{-3},10^{-3})$ & 1.3042 & \bf{0.9691} & 1.3083 & 1.1055 \\ \hline
$(10^{-2},10^{-3})$ & 2.3000 & \bf{1.6799} & 2.3094 & 1.7860 \\ \hline
$(10^{-3},10^{-2})$ & 3.5286 & \bf{2.9079} & 3.5619 & 4.3473 \\ \hline
$(10^{-2},10^{-2})$ & 6.8679 & \bf{5.0925} & 7.0990 & 7.6873 \\ \hline
\end{tabular}
\end{table}

\section{Conclusion}\label{sec:conclusion}
In this paper, the formal solutions to the filtering and smoothing problem have been established for the case when the state evolves in some submanifold of Euclidean space and the projection method was used to develop approximate solutions. Simulation studies suggest that there is indeed a benefit to appropriately accounting for the geometry of the state space, particularly for smoothing in noisy systems.

\section*{Acknowledgment}
Financial support by the Academy of Finland and Aalto ELEC Doctoral School is acknowledged. 

\section*{References}
\bibliographystyle{elsarticle-num-names}
\bibliography{../refs}

\end{document}